\crefname{hypothesis}{Hypothesis}{Hypotheses}
\title{Spectrum of MATLAB's magic squares\thanks{Submitted to the editors May $25^{th}$, $2022$.
\funding{This work was funded by none.}}}
\author{Hariprasad Manjunath\thanks{Indian Institute of Information Technology Dharwad
  (\email{hariprasad@iiitdwd.ac.in}, \url{https://www.iiitdwd.ac.in/hari.php}).}
\and Sivaram Ambikasaran\thanks{Indian Institute of Technology Madras 
  (\email{sivaambi@smail.iitm.ac.in}, \url{http://sivaramambikasaran.com/}).}}
\newcommand{\Rb}{\mathbb{R}}
\newcommand{\Cb}{\mathbb{C}}
\newcommand{\Zb}{\mathbb{Z}}
\newcommand{\Ib}{\mathbb{I}}
\newcommand{\Ob}{\mathbb{O}}
\newcommand{\bkt}[1]{\left(#1\right)}
\newcommand{\abs}[1]{\left\lvert#1\right\rvert}
\newcommand{\magn}[1]{\left\lVert#1\right\rVert}
\newcommand{\dsum}{\displaystyle\sum}
\newcommand{\red}[1]{{\color{red}#1}}
\newcommand{\blue}[1]{{\color{blue}#1}}
\newcommand{\brown}[1]{{\color{brown}#1}}
\newcommand{\cyan}[1]{{\color{cyan}#1}}
\newcommand{\teal}[1]{{\color{teal}#1}}
\begin{document}
\nolinenumbers
\maketitle
\begin{abstract}
	This article looks at the eigenvalues of magicsquares generated by the MATLAB's magic($n$) function. The magic() function constructs doubly even ($n = 4k$) magic squares, singly even ($n = 4k+2$) magic squares and odd ($n = 2k+1$) magic squares using different algorithms. The doubly even magic squares are constructed by a criss-cross method that involves reflecting the entries of a simple square about the center. The odd magic squares are constructed using the Siamese method. The singly even magic squares are constructed using a lower-order odd magic square (Strachey method). We obtain approximations of eigenvalues of odd and singly even magic squares with error bounds. Further, the eigenvalues of doubly even magic squares are exactly obtained. The approximation of the spectra involves some interesting connections with the spectrum of g-circulant matrices and the use of Bauer-Fike theorem.
\end{abstract}
\begin{keywords}
	Eigenvalues, Magic squares, FFT, g-circulant matrices, Bauer-Fike theorem
\end{keywords}
\section{Introduction}
Magic squares are given historical~\cite{anderson2001mathematical}, cultural~\cite{cammann1969islamic} and recreational~\cite{benefiel2012magic} importance. Matrix properties of the magic square are also studied~\cite{loly2009magic}\cite{van1990magic} for various applications in image processing~\cite{liu2012study}, privacy \cite{ranjani2017data},  statistics \cite{hunter2010some}, and geophysics \cite{adetokunbo20163d}. Spectral properties of some special magic squares have also been studied in the past~\cite{loly2009magic},~\cite{mattingly2000even} and ~\cite{lee2006linear}.

In this article, we discuss the eigenvalues of a class of magic squares. More specifically, we obtain the eigenvalues of magic squares generated by MATLAB's magic() command~\cite{moler2004numerical}. The magic() command uses the
\begin{enumerate}
	\item
	Siamese method~\cite{weisstein2002magic} for generating odd magic squares,
	\item
	Strachey method~\cite{ball2016mathematical} for generating singly even magic squares,
	\item
	A form of \emph{criss-cross} method for generating doubly even magic squares.
\end{enumerate}
The next section (Section~\ref{Notations}) provides the notations and definitions we will be working on within this article. In the subsequent sections (Sections~\ref{Odd},~\ref{singly_even},~\ref{doubly_even}), we prove theorems on the spectrum of the eigenvalues of odd, singly even and doubly even magic squares.

\section{Preliminaries}
\subsection{Notations and definitions}
\label{Notations}
The following notations and definitions are used throughout this article.
\begin{enumerate}
	\item
	$e_k$ is a unit column vector (of appropriate dimensions) whose $k^{th}$ entry is $1$.
	\item
	$\Ib_{p \times q}$ is a $p \times q$ matrix with all entries as $1$.
	\item
	$\Ib_{p}$ is a $p \times 1$ column vector with all entries as $1$. Note that $\Ib_{p \times 1} = \Ib_p$.
	\item
	$\Ob_{r \times s}$ is a $r \times s$ matrix with all entries as $0$.
	\item
	$\Ob_{r}$ is a $r \times 1$ column vector with all entries as $0$. Note that $\Ob_{r \times 1} = \Ob_r$.
	\item
	$I_t$ is a $t \times t$ identity matrix and $J_t$ is a $t \times t$ reverse identity matrix, i.e.,
	\begin{equation}
		I_t(i,j) = \begin{cases}
		1 & \text{ if }i=j,\\
		0 & \text{ otherwise}.
		\end{cases}\\
		J_t(i,j) = \begin{cases}
		1 & \text{ if }i+j=t+1,\\
		0 & \text{ otherwise}.
		\end{cases}
	\end{equation}
	\item
	A set $P$ is said to be $\epsilon$-approximated by set $Q$ if for each $p \in P$, there exists a $q(p) \in Q$ such that
	\begin{equation}
		\dfrac{\abs{p-q}}{\abs{q}} \leq \epsilon.
	\end{equation}
	\item
	A set $P$ is said to be $\epsilon$-well-approximated by set $Q$ if for each $p \in P$, there exists a $q(p) \in Q$ such that
	\begin{equation}
		\dfrac{\abs{p-q}}{\abs{q}} \leq \epsilon,
	\end{equation}
	and the map $q: P \mapsto Q$ is a bijection.
    \item
    $\omega$ will denote the $n^{th}$ root of unity given by $\omega = e^{-2 \pi i/n}$.
    \item
    The vector $v_j = \begin{bmatrix}
    1 & \omega^j & \omega^{2j} & \cdots & \omega^{(n-1)j}
    \end{bmatrix}^T \in \Rb^n$ will be termed as the $j^{th}$ Fourier mode. Note that $v_{n-j} = v_{-j}$ and $v_i^*v_j = n\delta_{ij}$.
    
    \item
    The $n \times n$ DFT matrix will be denoted by $F_n$ and is given by
    \begin{equation}
        F_n = \begin{bmatrix}
        v_0 & v_1 & \cdots & v_{n-1}
        \end{bmatrix}.
    \end{equation}
    	We have $ F_n\bkt{p,q}  = \omega^{\bkt{p-1}\bkt{q-1}} $ and $	F_n^*F_n = nI_{n} = F_nF_n^*$
 \item $C$ be a cycle permutation matrix given by 
 $C = \begin{bmatrix}
 \Ob_{1 \times (n-1)} & 1 \\
 I_{n \times n} & \Ob_{(n-1) \times 1}
 \end{bmatrix}.  $
 Note that pre-multiplying by $C$ permutes the rows along the forward cycle, while post-multiplying by $C$ permutes the columns along the reverse cycle.
 \item
	$M_n$ is the $n \times n$ magic square obtained from MATLAB's magic(n) command.
	\item
	$\mu_0$ is the magic sum $\dfrac{n\bkt{n^2+1}}2$.
	\item
	$\left\{\mu_i \right\}_{i=0}^{n-1}$ will denote the $n$ eigenvalues of $M_n$.
	\item
	We term $\bkt{\mu_0,\Ib_n}$ to be the trivial eigenpair.
 \end{enumerate}
 
 \subsection{Properties of magicsquares}
 In this section, we list some well-known properties of the magic squares.
\begin{itemize}
\item For any $n \times n$ magic square, $M_n$, we have $M_n \Ib_{n \times 1} = \dfrac{n\bkt{n^2+1}}2 \Ib_{n \times 1}$ and $\Ib_{1 \times n} M_n = \dfrac{n\bkt{n^2+1}}2 \Ib_{1 \times n}$.
\item The magic sum is the largest eigenvalue with multiplicity $1$.
	This Follows immediately from the Perron-Frobenius theorem.
\item 
	Every eigenvector of $M_n$ other than $\Ib_{n \times 1}$ is orthogonal to $\Ib_{n \times 1}$.
\begin{proof}
	This follows by noting that all one vector is also a left eigenvector, and left and right eigenvectors of a matrix are biorthogonal.
\end{proof}
\end{itemize}

 \subsection{Spectrum of circulant matrices} In this section we recall some spectral properties of circulant matrices. Let $R$ be a general circulant matrix.
The vectors $v_j$ are the eigenvectors of the circulant matrix $R$. The eigenvalue corresponding to the vector $v_j$ is given by
	\begin{equation}
		\lambda_j = \dfrac{v_j^*Rv_j}{v_j^*v_j} = \dfrac{v_j^*Rv_j}n.
	\end{equation}
If $\mathbf{a}$ is the first row of the circulant matrix $R$, we then have
	\begin{equation}
		\lambda_j = \mathbf{a}v_j.
	\end{equation}

This can be proved by looking at the $i^{\text{th}}$ entry of the vector $Rv_j$. We have  $$(R v_j)(i) = \mathbf{a} C^{n-(i-1)} v_j = \mathbf{a} \omega^{((i-1)j)} v_j = \mathbf{a}v_j \omega^{((i-1)j)} = (\mathbf{a}v_j) v_j(i) .$$

If $\Lambda_n \in \Cb^{n \times n}$ is the matrix of eigenvalues of $R$, i.e.,
	\begin{equation}
		\Lambda_n\bkt{i,j} = \begin{cases}
	\lambda_j & \text{ if }i=j,\\
	0 & \text{ otherwise},
	\end{cases}
	\end{equation}
	we then have
	\begin{equation}
		\Lambda_n = \dfrac{F_n^*RF_n}n,
    \end{equation}
    \begin{equation}
		R = \dfrac{F_n\Lambda_n F_n^*}n.
	\end{equation}

\subsection{g-circulant matrices}\label{gcirc}
Let $R_g$ is a $g$-row circulant matrix, where $g\in\{1,2,\ldots,n-1\}$, if for all $i,j,k \in \{1,2,\ldots,n\}$, we have
	\begin{equation}
		R_g\bkt{i,j} = R_g\bkt{\bkt{i+k}\bmod{n},\bkt{j+gk}\bmod{n}}.
	\end{equation}
	It is possible to have a decomposition $R_g = P_g R$, where $P_g$ is a $g$-row circulant matrix with first row being $e_1^T$,  $R$ is a circulant matrix having the same first row as $R_g$.
 Also g-row circulant matrix is a $s = (g^{-1} \mod n)$ column circulant matrix. Then we have $R_g = R P_s$ with $R$ being the circulant matrix with same column as $R_g$. And $P_s$ is an $s$ row circulant matrix with first column beign $e_1$.
 \subsection{Spectrum of reverse circulant matrix}\label{rc_eig} In this subsection, we obtain the eigenvalues and eigenvectors of a reverse circulant matrix.
 Let $R$ be the circulant matrix with eigenpairs given by $\bkt{\lambda_j,v_j}_{j=0}^{n-1}$, i.e.,
	\begin{equation}
		Rv_j = \lambda_j v_j.
	\end{equation}
	We now find the eigenpairs of the matrix $JR$, where $J$ is the flipped identity matrix. Note that $(\lambda_0, \mathbb{I}_{n \times 1})$ is also an eigenpair of the reverse circulant matrix. \\
    We have
	\begin{equation}
		JRv_j = \lambda_j Jv_j.
	\end{equation}
	We also have
	\begin{align}
		Jv_j & = \begin{bmatrix}\omega^{(n-1)j} & \omega^{(n-2)j} & \cdots & \omega^{2j} & \omega^{j} & 1\end{bmatrix}^T\\
		& = \omega^{-j} \begin{bmatrix}1 & \omega^{-j} & \cdots & \omega^{-(n-2)j} & \omega^{-(n-1)j} \end{bmatrix}^T\\
		& = \omega^{-j} v_{n-j}.
	\end{align}
	Note that $v_{n-j} = \overline{v_j}$ and $\lambda_{n-j} = \overline{\lambda_j}$. Hence, we obtain
	\begin{equation}
		JRv_j = \lambda_j Jv_j = \omega^{-j} \lambda_j v_{n-j}.
	\end{equation}
	Replacing $j$ by $n-j$, we obtain
	\begin{equation}
		JRv_{n-j} = \lambda_{n-j} Jv_{n-j} = \omega^{-\bkt{n-j}} \lambda_{n-j} v_{j} = \omega^{j} \lambda_{n-j} v_{j} = \overline{\omega^{-j} \lambda_{j}} v_{j}.
	\end{equation}
	Hence, we see that
	\begin{align}
		JR \bkt{v_j \pm \dfrac{\omega^{-j} \lambda_j}{\abs{\omega^{-j} \lambda_j}} v_{n-j}} & = \omega^{-j}\lambda_j v_{n-j} \pm \abs{\omega^{-j} \lambda_j} v_j,\\
		& = \mp \abs{\omega^{-j} \lambda_j} \bkt{v_j \pm \dfrac{\omega^{-j} \lambda_j}{\abs{\omega^{-j} \lambda_j}} v_{n-j}}.
	\end{align}
    We consider $j \leq \frac{n-1}{2}$ to get $n-1$ the eigenpairs (We are interested with odd $n$ in this article). 
	Hence, we obtain the $ n-1 $ eigenpairs of $JR$ to be
	$$\bkt{ \pm \abs{\lambda_j},v_j \mp \dfrac{\omega^{-j} \lambda_j}{\abs{\lambda_j}} v_{n-j}}   \text{  for } j \leq \frac{n-1}{2},$$
	where $\bkt{\lambda_j,v_j}$ are the eigenpairs of $R$. \\ Note that if we let $u_0 = \frac{1}{\sqrt{n}}\mathbb{I}$, and $u_j = \dfrac{v_j \pm \dfrac{\omega^{-j} \lambda_j}{\abs{\lambda_j}} v_{n-j}}{\magn{v_j \pm \dfrac{\omega^{-j} \lambda_j}{\abs{\lambda_j}} v_{n-j}}_2}$, \\
 then we have
	\begin{equation}
		u_j^*u_k = \delta_{j,k}.
	\end{equation}
    Hence, the eigenvectors of the reverse circulant matrix are orthonormal.

\section{Odd magic squares}
\label{Odd}
The first two odd magic squares generated by MATLAB's magic() command are given in Equation~\eqref{odd_magic}.
\begin{align}
M_3 = \begin{bmatrix}
\teal{8} & \red{1} & \blue{6} \\
\red{3} & \blue{5} & \teal{7} \\
\blue{4}  & \teal{9} & \red{2}
\end{bmatrix},
M_5 =  \begin{bmatrix}
\teal{17} & \red{24} & \blue{1} & \brown{8} & \cyan{15} \\
\red{23} & \blue{5} & \brown{7} & \cyan{14} & \teal{16} \\
\blue{4} & \brown{6} & \cyan{13} & \teal{20} & \red{22} \\
\brown{10} & \cyan{12} & \teal{19} & \red{21} & \blue{3} \\
\cyan{11} & \teal{18} & \red{25} & \blue{2} & \brown{9} 
\end{bmatrix}
\label{odd_magic}
\end{align}
Two different MATLAB codes (one using for-loops and the other one completely vectorized) for obtaining odd magic square are presented here~\cite{clevemagic2}. 

We present the vectorized code, since it is easy to get a matrix interpretation from that code. 
The vectorized MATLAB code is given by \cite{clevemagic2},
\begin{tcolorbox}
	\begin{verbatim}
   [I,J] = ndgrid(1:n);
   A = mod(I+J+(n-3)/2,n);
   B = mod(I+2*J-2,n);
   M = n*A + B + 1;
\end{verbatim}
\end{tcolorbox}
From the code, it immediately follows that the matrices $A$ and $B$ have the entries
$A(i,j) = \bkt{i+j+ \frac{n-3}{2}} \mod n$, and $B(i,j) = \bkt{i+2j-2} \mod n$.

Note $A(i,j) = A(i+1,j-1)$, (With modulo $n$ and index zero being index $n$). Hence, $A$ is a reverse circulant matrix.

Further we have $B(i,j) = B\bkt{i+1,j-\frac{(n+1)}{2}}$ (With modulo $n$ and index zero being index $n$), so $B$ is a reverse $\bkt{\frac{n+1}{2}}$ circulant matrix~\cite{andrade2020spectra}.

Thus we have the decomposition 

\begin{equation}
    M_n = nJ_nX_n + J_n Y_n,
    \label{relation}
\end{equation}
with $X_n$ being a circulant matrix and $Y_n = B + \mathbb{I}\mathbb{I}^T$ being a $\frac{n+1}{2}$ row circulant matrix.\\
 Since $X_n = J_nA$, the entries of the matrix $X_n$ are given by
 \begin{align}
     X_n(i,j) & = A(n-i+1,j) = \mod\bkt{n-i+1+j + \frac{n-3}{2},n}\\
     & = \bkt{-i+j + \frac{n-1}{2} } \mod n.
 \end{align}
 With $m = \frac{n-1}{2}$, the first row of $X_n$ is given by
\begin{equation}
	\mathbf{a} = \begin{bmatrix}m & m+1 & m+2 & \cdots &2m & 0 & 1 & 2 & \cdots & m-1\end{bmatrix}.
\end{equation}

The first column of $X_n$ is given by 

\begin{equation}
	\mathbf{c} = \begin{bmatrix}m & m-1 & m-2 & \cdots &1 & 0 & 2m & 2m-1 & \cdots & m+1\end{bmatrix}^T.
\end{equation}

We consider the matrix $B+\mathbb{I}\mathbb{I}^T = J_nY_n$, such that $Y_n$ is a $\bkt{m+1}$-row circulant~\cite{andrade2020spectra} matrix. Hence, we have
\begin{equation}
Y_n(i,j) = B(n-i+1,j)+1 =1 + (n-i+2j -1 \mod n) = 1 + (-i+2j- 1 \mod n).
\end{equation}

The first row of $Y_n$ is being
\begin{equation}
	\mathbf{b} = \begin{bmatrix}
1 & 3 & \cdots & 2m-1 & 2m+1 & 2 & 4 & \cdots & 2m
\end{bmatrix}.
\end{equation}

The first column of $Y_n$ is given by 

\begin{equation}
	\mathbf{d} = \begin{bmatrix}
1 & 2m+1 & 2m & \cdots & m  & m-1 \cdots & 2
\end{bmatrix}^T.
\end{equation}

Note that 
\begin{equation}
C^{m+1}\mathbf{c} = \mathbf{d} - \mathbb{I}_{n \times 1}. \label{colsxy}
\end{equation}

Since $Y_n$ is $m+1$ circulant matrix, from~\ref{gcirc}, we have $Y_n = R P$ with circulant matrix $R$ having first column same as that of $Y_n$. Hence, from Equation~\eqref{colsxy}, we have \begin{equation}
Y_n = \bkt{C^{m+1}X_n + \mathbb{I}_{n\times n}}P \label{yncxn}
\end{equation}
For the $5\times5$ case, we have the decomposition matrices 
\begin{align}
	\begin{split}
    M_5 & = \begin{bmatrix}
\teal{17} & \red{24} & \blue{1} & \brown{8} & \cyan{15} \\
\red{23} & \blue{5} & \brown{7} & \cyan{14} & \teal{16} \\
\blue{4} & \brown{6} & \cyan{13} & \teal{20} & \red{22} \\
\brown{10} & \cyan{12} & \teal{19} & \red{21} & \blue{3} \\
\cyan{11} & \teal{18} & \red{25} & \blue{2} & \brown{9} 
\end{bmatrix} =
\begin{bmatrix}
\teal{15} & \red{20} & \blue{0} & \brown{5} & \cyan{10}\\
\red{20} & \blue{0} & \brown{5} & \cyan{10} & \teal{15}\\
\blue{0} & \brown{5} & \cyan{10} & \teal{15} & \red{20}\\
\brown{5} & \cyan{10} & \teal{15} & \red{20} & \blue{0}\\
\cyan{10} & \teal{15} & \red{20} & \blue{0} & \brown{5}\\
\end{bmatrix}
+
\begin{bmatrix}
\teal{2} & \red{4} & \blue{1} & \brown{3} & \cyan{5}\\
\brown{3} & \cyan{5} & \teal{2} & \red{4} & \blue{1}\\
\red{4} & \blue{1} & \brown{3} & \cyan{5} & \teal{2}\\
\cyan{5} & \teal{2} & \red{4} & \blue{1} & \brown{3}\\
\blue{1} & \brown{3} & \cyan{5} & \teal{2} & \red{4}\\
\end{bmatrix}
\\
& = \begin{bmatrix}
0 & 0 & 0 & 0 & \bf{1}\\
0 & 0 & 0 & \bf{1} & 0\\
0 & 0 & \bf{1} & 0 & 0\\
0 & \bf{1} & 0 & 0 & 0\\
\bf{1} & 0 & 0 & 0 & 0\\
\end{bmatrix}
\bkt{
5 \overbrace{\begin{bmatrix}
\cyan{2} & \teal{3} & \red{4} & \blue{0} & \brown{1}\\
\brown{1} & \cyan{2} & \teal{3} & \red{4} & \blue{0}\\
\blue{0} & \brown{1} & \cyan{2} & \teal{3} & \red{4}\\
\red{4} & \blue{0} & \brown{1} & \cyan{2} & \teal{3}\\
\teal{3} & \red{4} & \blue{0} & \brown{1} & \cyan{2}\\
\end{bmatrix}}^{X_5}
+
\overbrace{\begin{bmatrix}
\blue{1} & \brown{3} & \cyan{5} & \teal{2} & \red{4}\\
\cyan{5} & \teal{2} & \red{4} & \blue{1} & \brown{3}\\
\red{4} & \blue{1} & \brown{3} & \cyan{5} & \teal{2}\\
\brown{3} & \cyan{5} & \teal{2} & \red{4} & \blue{1}\\
\teal{2} & \red{4} & \blue{1} & \brown{3} & \cyan{5}\\
\end{bmatrix}}^{Y_5}
}.
\end{split}
\end{align}

We rewrite Equation~\eqref{relation} as
\begin{align}
    M_n & = J_n \bkt{\overbrace{nX_n + (m+1)\Ib_n\Ib_n^T}^{S_n} + \underbrace{Y_n - (m+1)\Ib_n\Ib_n^T}_{T_n}},
    \label{UVeqn}
\end{align}
where $\Ib_n$ is the column vector of ones. Note that $S_n$ is a circulant matrix and $T_n$ is $(m+1)$-row circulant matrix.

For the $5 \times 5$ case, we have this decomposition
\begin{align}
    M_5 & = \begin{bmatrix}
0 & 0 & 0 & 0 & \bf{1}\\
0 & 0 & 0 & \bf{1} & 0\\
0 & 0 & \bf{1} & 0 & 0\\
0 & \bf{1} & 0 & 0 & 0\\
\bf{1} & 0 & 0 & 0 & 0\\
\end{bmatrix}
\bkt{
\overbrace{\begin{bmatrix}
\cyan{13} & \teal{18} & \red{23} & \blue{3} & \brown{8}\\
\brown{8} & \cyan{13} & \teal{18} & \red{23} & \blue{3}\\
\blue{3} & \brown{8} & \cyan{13} & \teal{18} & \red{23}\\
\red{23} & \blue{3} & \brown{8} & \cyan{13} & \teal{18}\\
\teal{18} & \red{23} & \blue{3} & \brown{8} & \cyan{13}\\
\end{bmatrix}}^{S_5}
+
\overbrace{\begin{bmatrix}
\blue{-2} & \brown{0} & \cyan{2} & \teal{-1} & \red{1}\\
\cyan{2} & \teal{-1} & \red{1} & \blue{-2} & \brown{0}\\
\red{1} & \blue{-2} & \brown{0} & \cyan{2} & \teal{-1}\\
\brown{0} & \cyan{2} & \teal{-1} & \red{1} & \blue{-2}\\
\teal{-1} & \red{1} & \blue{-2} & \brown{0} & \cyan{2}\\
\end{bmatrix}}^{Y_5}
}.
\end{align}
The main theorem proved in this section is below (Theorem~\ref{main_odd_theorem}).
\begin{tcolorbox}[title=Non-trivial eigenvalues of odd magic squares]
\begin{theorem}
	\label{main_odd_theorem}
	Let $n=2m+1$ and let $$\Lambda = \left\{  \lambda_j = \dfrac{n^2}{2\sin\bkt{j\pi/n}}: j \in\{\pm1,\pm2,\ldots,\pm m\} \right\}$$
	Then the set of non-trivial eigenvalues of an odd magic square generated using the \textbf{Siamese method} is $\frac1n$-approximated by $\Lambda$. 
\end{theorem}
\end{tcolorbox}
A similar result can be found in \cite{bnmn}. The remainder of this section discusses the proof of the above theorem (Theorem~\ref{main_odd_theorem}).
\begin{proof}
From Equation~\eqref{UVeqn}, we have $M_n=J_nS_n+J_nT_n$. \noindent\textbf{We now look at $J_nT_n$ as a perturbation to $J_nS_n$ to get $M_n$ and claim that the eigenvalues of $M_n = J_nS_n+J_nT_n$ are well approximated by the eigenvalues of $J_nS_n$. The eigenvalues and eigenvectors of $J_nS_n$ can be easily computed since $S_n$ is a circulant matrix. We now rely on  the Bauer-Fike theorem (Corollary 2.2 of \cite{eisenstat1998three}) to make the above statement precise.}
\begin{tcolorbox}
\begin{theorem}
\label{Bauer-Fike}
\textbf{Bauer-Fike theorem}: Let $U_n \in \Cb^{n \times n}$ be a diagonalizable matrix. Let $\Lambda_n \in \Cb^{n \times n}$ be a diagonal matrix with eigenvalues of $U_n$ and $W_n \in \Cb^{n \times n}$ be the matrix containing the eigenvectors of $U_n$ such that $U_n = W_n \Lambda_n W_n^{-1}$. Let $\kappa_p\bkt{W_n} = \magn{W_n}_p\magn{W_n^{-1}}_p$ be the condition number of the matrix $W_n$ in $p$-norm. 

If $\mu$ is an eigenvalue of $U_n+V_n$, where $V_n \in \Cb^{n \times n}$, then there exists $\lambda$, an eigenvalue of $U_n$, such that
\begin{align}
	\dfrac{\abs{\mu-\lambda}}{\abs{\lambda}} \leq \kappa_p\bkt{W_n} \magn{U_n^{-1}V_n}_p
\end{align}
\end{theorem}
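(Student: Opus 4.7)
The plan is to replay the classical Bauer--Fike argument with one deliberate twist: instead of factoring $U_n - \mu I$ out of $U_n + V_n - \mu I$ (which produces the standard \emph{absolute} bound $\abs{\mu-\lambda} \leq \kappa_p\bkt{W_n}\magn{V_n}_p$), I will factor $U_n$ itself out. This single change is precisely what turns $V_n$ into $U_n^{-1}V_n$ in the final estimate and hence delivers the relative bound stated in the theorem. Note that the bound implicitly requires $U_n$ to be invertible, which is consistent with the appearance of $U_n^{-1}V_n$ on the right-hand side.

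I would first dispose of the trivial case: if $\mu$ already coincides with some eigenvalue $\lambda_i$ of $U_n$, then choosing $\lambda = \mu$ makes the left-hand side vanish. So from here on I assume $\mu \notin \{\lambda_1, \ldots, \lambda_n\}$, which ensures that the diagonal matrix $I - \mu \Lambda_n^{-1}$, whose entries are $(\lambda_i - \mu)/\lambda_i$, is invertible.

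The key factorization is
\[
U_n + V_n - \mu I \;=\; U_n\bkt{(I - \mu U_n^{-1}) + U_n^{-1} V_n} \;=\; W_n \Lambda_n \bkt{(I - \mu \Lambda_n^{-1}) + W_n^{-1} U_n^{-1} V_n W_n} W_n^{-1}.
\]
The left-hand side is singular because $\mu$ is an eigenvalue of $U_n + V_n$, and invertibility of $W_n$ and $\Lambda_n$ then forces the middle bracket to be singular. Pulling $I - \mu \Lambda_n^{-1}$ out on the left shows that $I + (I - \mu \Lambda_n^{-1})^{-1} W_n^{-1} U_n^{-1} V_n W_n$ is singular, so its induced $p$-norm is at least $1$. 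Submultiplicativity of the $p$-norm then yields
\[
\magn{(I - \mu \Lambda_n^{-1})^{-1}}_p \cdot \kappa_p\bkt{W_n} \cdot \magn{U_n^{-1} V_n}_p \;\geq\; 1.
\]

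To finish, I would invoke the standard fact that for any diagonal matrix $D$ the induced $p$-norm satisfies $\magn{D}_p = \max_i \abs{d_{ii}}$, valid for every $p \in [1,\infty]$, so $\magn{(I - \mu \Lambda_n^{-1})^{-1}}_p = \max_i \abs{\lambda_i}/\abs{\lambda_i - \mu}$. Picking $\lambda$ to be an eigenvalue of $U_n$ attaining this maximum --- equivalently, minimizing the relative distance $\abs{\mu - \lambda_i}/\abs{\lambda_i}$ --- and rearranging delivers $\abs{\mu - \lambda}/\abs{\lambda} \leq \kappa_p\bkt{W_n} \magn{U_n^{-1} V_n}_p$, as required. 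The argument is essentially mechanical; the only conceptual step is the choice of factorization in the third paragraph, so there is no significant obstacle to flag.
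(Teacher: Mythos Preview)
The paper does not actually prove this theorem; it is stated as a known tool (a relative-error variant of the classical Bauer--Fike theorem) and immediately applied with $U_n = J_nS_n$ and $V_n = J_nT_n$. Your argument is correct and supplies exactly what the paper omits: the factorization $U_n + V_n - \mu I = W_n\Lambda_n\bigl((I - \mu\Lambda_n^{-1}) + W_n^{-1}U_n^{-1}V_nW_n\bigr)W_n^{-1}$ is the right twist on the standard Bauer--Fike proof to produce the relative bound, and the remaining steps (singularity forces the bracket norm $\geq 1$, the diagonal $p$-norm equals the maximal entry modulus, and then rearrange) are all sound. Your explicit note that the bound tacitly assumes $U_n$ invertible is also appropriate, since the paper's statement uses $U_n^{-1}V_n$ without comment.
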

\end{tcolorbox}
We utilize Theorem~\ref{Bauer-Fike}, by choosing $U_n=J_nS_n$ and $V_n=J_nT_n$. The matrix $J_nS_n$ is a reverse circulant matrix and is diagonalizable. The eigenvalues and eigenvectors of $J_nS_n$ are derived in Section ~\ref{rc_eig}. Further, the eigenvectors can be chosen to be orthonormal. Hence, the matrix $W_n$ in Bauer-Fike theorem (Theorem~\ref{Bauer-Fike}) is orthogonal matrix. Hence from Theorem~\ref{Bauer-Fike}, and choosing $p=2$, we have that if $\mu$ is an eigenvalue of $M_n$, then there exists $\lambda$, an eigenvalue of $J_nS_n$, such that
\begin{align}
	\dfrac{\abs{\mu-\lambda}}{\abs{\lambda}} \leq \magn{\bkt{J_nS_n}^{-1}\bkt{J_nT_n}}_2 = \magn{S_n^{-1}T_n}_2.
\end{align}
\begin{lemma}
	\label{lemma1}
	The eigenvalues of $U_n=J_nS_n$ are given by
	\begin{align}
		\lambda_j = \begin{cases}
		\dfrac{n\bkt{n^2+1}}2 & j=0\\
		\pm\dfrac{n^2}{2\sin\bkt{\pi j/n}} & j \in \{1,2,\ldots,m\}
	\end{cases}
	\end{align}
	The eigenvectors of $U_n=J_nS_n$ are orthonormal.
\end{lemma}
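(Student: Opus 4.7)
The plan is to exploit the fact that $S_n$ is circulant, so its spectrum is read off from the discrete Fourier transform of its first row. Writing $S_n v_j = s_j v_j$, where $s_j$ is the $j$-th DFT coefficient of the first row of $S_n$, and noting the elementary identity $J_n v_j = \omega^{-j} v_{-j}$ (immediate from the definitions of $J_n$ and $v_j$ together with $\omega^n=1$), I obtain
\begin{equation*}
J_n S_n v_j \;=\; s_j\,\omega^{-j}\,v_{-j}.
\end{equation*}
For $j=0$ this gives a one-dimensional invariant subspace with eigenvalue $s_0$. For $j\in\{1,\ldots,m\}$, $J_n S_n$ stabilizes the two-dimensional subspace spanned by $\{v_j,v_{-j}\}$, acting there as the off-diagonal block
\begin{equation*}
\begin{bmatrix} 0 & s_{-j}\,\omega^{j} \\ s_j\,\omega^{-j} & 0 \end{bmatrix},
\end{equation*}
whose eigenvalues are $\pm\sqrt{s_j s_{-j}}$. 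Since $S_n$ is real, $s_{-j}=\overline{s_j}$ and therefore $s_j s_{-j}=\abs{s_j}^2$ is real and nonnegative.

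Next I would compute $s_j$ explicitly. The first row of $S_n=nX_n+(m+1)\Ib_n\Ib_n^T$ is $\bkt{n a_k+(m+1)}_{k=0}^{n-1}$ with $a_k=(m+k)\bmod n$, so the constant term drops out of the DFT for $j\neq 0$. Splitting the sum at the wrap-around point $k=m$ and combining the standard identities $\sum_{k=0}^{n-1} k\,\omega^{kj}=n/(\omega^j-1)$ and $\sum_{k=0}^{n-1}\omega^{kj}=0$ produces, after a short telescope,
\begin{equation*}
s_j \;=\; \dfrac{n^2\,\omega^{(m+1)j}}{\omega^j-1},\qquad j\neq 0.
\end{equation*}
Using $\abs{\omega^j-1}^2=4\sin^2(\pi j/n)$ yields $\abs{s_j}=n^2/\bkt{2\sin(\pi j/n)}$, which gives the $\pm$ eigenvalues in the claimed form. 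For $j=0$, one computes $s_0=n\sum_k a_k+n(m+1)=n^2(n-1)/2+n(m+1)$, which simplifies under $n=2m+1$ to the magic sum $n(n^2+1)/2$.

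The orthonormality of the eigenvectors follows from the observation that $J_n S_n$ is itself a real symmetric matrix: a direct index computation shows $(J_n S_n)_{ij}$ depends only on $(i+j)\bmod n$, so $J_n S_n$ is a reverse circulant and hence symmetric. The spectral theorem then supplies an orthonormal eigenbasis; concretely, inside each invariant plane $\mathrm{span}\{v_j,v_{-j}\}$ one may take the real vectors $(v_j+v_{-j})/\sqrt{2n}$ and $i(v_j-v_{-j})/\sqrt{2n}$ (real because $v_{-j}=\overline{v_j}$) and rotate them into the two eigenvectors of the $2\times 2$ block.

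The main obstacle is the bookkeeping in computing $s_j$: because the first row of $X_n$ wraps around at index $m+1$, one must carefully split the DFT sum and identify the small telescoping that collapses the answer to the clean form $n^2\omega^{(m+1)j}/(\omega^j-1)$. Beyond that step, the standard Fourier diagonalization of circulants and the elementary trig identity for $\abs{\omega^j-1}$ do all the work.
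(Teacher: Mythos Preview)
Your proposal is correct and follows essentially the same route as the paper: Fourier diagonalization of the circulant $S_n$, the identity $J_n v_j=\omega^{-j}v_{-j}$, and the resulting $2\times 2$ block structure on $\mathrm{span}\{v_j,v_{-j}\}$ (the paper packages this as Note~\ref{note12} in the appendix, together with the explicit DFT computation for $X_n$ in Lemma~\ref{lem1}). Your orthonormality argument---observing that $J_nS_n$ is a real symmetric reverse circulant and invoking the spectral theorem---is a bit more direct than the paper's explicit construction of the eigenvectors followed by a check of their inner products, but the substance is the same.
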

\begin{proof}
	From Lemma~\ref{lem1}, we get the eigenvalues of $X_n$. The eigenvalues of $S_
n$ are the eigenvalues of $X_n$, except the first eigenvalue, $\lambda_0\bkt{S_n}$, which is given by $\lambda_0\bkt{X_n} + \bkt{m+1}$. Note that the matrix $U_n$ is a reverse circulant matrix. Hence, from Section~\ref{rc_eig} eigenpairs of $U_n$ can be obtained in terms of eigenpairs of $S_n$.
\end{proof}
\begin{lemma}
	\label{lemma2}
	\begin{align}
		\magn{S_n^{-1}T_n}_2 = \dfrac1n.
	\end{align}
\end{lemma}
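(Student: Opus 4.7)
The plan is to exploit the circulant structure of $S_n$ and the $(m+1)$-row circulant structure of $T_n$ in order to reduce $\magn{S_n^{-1}T_n}_2$ to a single cancellation of Fourier symbols. Since $\gcd(m+1,n)=1$ (indeed $2(m+1)=n+1$), the standard $g$-circulant decomposition applies: I would write $T_n=P\tilde T_n$, in which $\tilde T_n$ is the ordinary circulant sharing $T_n$'s first row $\mathbf b-(m+1)\Ib_n^T$ and $P$ is the permutation matrix of the map $i\mapsto(m+1)i\bmod n$. Both $S_n$ and $\tilde T_n$ are then diagonalized simultaneously by $F_n$, and a direct computation using $Pv_k=v_{(m+1)k\bmod n}$ shows that $P$ becomes, in the Fourier basis, the index permutation $Q:e_k\mapsto e_{(m+1)k\bmod n}$. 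Consequently
\begin{align}
F_n^{-1}\bkt{S_n^{-1}T_n}F_n=\Sigma^{-1}QD,
\end{align}
where $\Sigma=\diag\bkt{\hat s_0,\ldots,\hat s_{n-1}}$ and $D=\diag\bkt{\hat d_0,\ldots,\hat d_{n-1}}$ collect the Fourier symbols of $S_n$ and $\tilde T_n$, respectively.

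The matrix $\Sigma^{-1}QD$ is a weighted permutation: in column $k$ its unique nonzero entry sits in row $(m+1)k\bmod n$ and has magnitude $\abs{\hat d_k/\hat s_{(m+1)k}}$. Since $F_n=\sqrt n\,U$ with $U$ unitary, conjugation by $F_n$ is an isometry for the spectral norm, hence
\begin{align}
\magn{S_n^{-1}T_n}_2=\max_{0\le k\le n-1}\abs{\frac{\hat d_k}{\hat s_{(m+1)k}}}.
\end{align}
To evaluate this maximum I would compute the symbols explicitly from the known forms of $\mathbf a$ and $\mathbf b$. Using the identifications $\mathbf a_j\equiv j-(m+1)\pmod n$ and $\mathbf b_j-1\equiv 2j\pmod n$, together with the standard closed form $\sum_{j=0}^{n-1}j\omega^{jk}=-n/(1-\omega^k)$ for $k\not\equiv 0$, a single change of summation variable yields
\begin{align}
\hat s_k=\frac{-n^2\,\omega^{(m+1)k}}{1-\omega^k},\qquad \hat d_k=\frac{-n}{1-\omega^{(m+1)k}}\qquad(k\neq 0),
\end{align}
while $\hat d_0=0$.

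The proof concludes with a pleasing cancellation. For every $k\neq 0$, the factor $1-\omega^{(m+1)k}$ in $\hat d_k$ is precisely the denominator of $\hat s_{(m+1)k}$, so
\begin{align}
\frac{\hat d_k}{\hat s_{(m+1)k}}=\frac{1}{n\,\omega^{(m+1)^2 k}},
\end{align}
which has modulus $1/n$. Combined with $\hat d_0=0$, this delivers $\magn{S_n^{-1}T_n}_2=1/n$. The main obstacle is the very first move: unpacking the convention of an $(m+1)$-row circulant carefully enough to verify $T_n=P\tilde T_n$, and then checking that $P$ conjugates the Fourier basis to the clean index permutation $k\mapsto(m+1)k\bmod n$. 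Once that bookkeeping is done, the remaining work is a single geometric-series identity and an essentially automatic cancellation.
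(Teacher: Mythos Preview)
Your argument is correct, but it takes a different route from the paper. The paper does not pass to the Fourier side at all: instead it applies the Sherman--Morrison formula to $S_n=nX_n+(m+1)\Ib_n\Ib_n^T$ and, after what it calls ``some tedious algebra,'' obtains the explicit closed form
\[
S_n^{-1}T_n=\dfrac1{n^2}\bkt{nG-\Ib_n\Ib_n^T},
\]
where $G$ is the $(m+1)$-row circulant with first row $e_2^T$ (hence a permutation matrix). From there it computes $\magn{S_n^{-1}T_nx}_2^2$ directly in the standard basis, finding $\magn{x}_2^2/n^2-(\Ib_n^Tx)^2/n^3$, which is maximized at $1/n^2$ when $\Ib_n^Tx=0$.

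Your approach instead diagonalizes $S_n$, $\tilde T_n$, and $P$ simultaneously by $F_n$ and reads off the norm as the maximum modulus of the entries of the resulting weighted permutation $\Sigma^{-1}QD$. The two methods are essentially dual: the paper's identity $S_n^{-1}T_n=\frac1{n^2}(nG-\Ib_n\Ib_n^T)$ is exactly what your Fourier computation encodes---$G$ contributes the uniform $1/n$ on the nonzero modes, and $-\frac1{n^2}\Ib_n\Ib_n^T$ kills the zero mode. Your route avoids the Sherman--Morrison bookkeeping and makes the cancellation transparent (every nonzero column has modulus exactly $1/n$, not merely bounded by it); the paper's route is more elementary in that it never invokes $F_n$ or the $g$-circulant factorization $T_n=P\tilde T_n$, and it yields the explicit real-space form of $S_n^{-1}T_n$ as a byproduct.
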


\begin{proof}
First note that $X_n\mathbb{I} = \dfrac{n(n-1)}2\mathbb{I}$ and $Y_n\mathbb{I} = \dfrac{n(n+1)}2\mathbb{I}$. We have $S_n = nX_n+\bkt{m+1}\mathbb{I}\mathbb{I}^T$ and $T_n=Y_n-(m+1)\mathbb{I}\mathbb{I}^T$. By using the Sherman–Morrison formula, we have
\begin{equation}
S_n^{-1} = \frac{1}{n}X_n^{-1}- \frac{n+1}{n^3(n^2+1)} \mathbb{I}\mathbb{I}^T
\end{equation}
Further, $T_n = Y_n - \bkt{\frac{n+1}{2}} \mathbb{I}\mathbb{I}^T$.
Hence, we obtain
\begin{align}
    S_n^{-1}T_n = \frac{1}{n}X_n^{-1}Y_n - \frac{n+1}{n^2(n-1)} \mathbb{I} \mathbb{I}^T.
\end{align}
Let
\begin{equation}
	S_n^{-1}T_n = \dfrac1{n^2}\bkt{nG-\mathbb{I}\mathbb{I}^T},
\end{equation}
where $G = X_n^{-1}Y_n -\frac{2}{n(n-1)} \mathbb{I} \mathbb{I}^T$.

From Equation~\eqref{yncxn}, we have
\begin{equation}
    X_n^{-1}Y_n = \bkt{C^{m+1} + \frac{2}{n(n-1)} \mathbb{I}\mathbb{I}^T}P = C^{m+1}P + \dfrac2{n\bkt{n-1}} \mathbb{I}\mathbb{I}^T.
\end{equation}
This gives us
\begin{equation}
    G = X_n^{-1}Y_n -\frac{2}{n(n-1)} \mathbb{I} \mathbb{I}^T = C^{m+1}P.
\end{equation}
Hence, $G$ is a permutation matrix.


Now we seek bound on $\magn{S_n^{-1}T_n}_2$. We have
\begin{equation}
	\magn{S_n^{-1}T_n}_2^2 = \max_{x \in \Rb^n \backslash \{0\}} \dfrac{\magn{S_n^{-1}T_nx}_2^2}{\magn{x}_2^2}.
\end{equation}
We have
\begin{equation}
	S_n^{-1}T_nx = \dfrac1{n^2}\bkt{nGx - \mathbb{I}\mathbb{I}^Tx} = \dfrac1{n^2}\bkt{nGx - \bkt{x_1+x_2+\cdots+x_n}\mathbb{I}}.
\end{equation}
Hence, we have
\begin{align}
	\magn{S_n^{-1}T_nx}_2^2 & = \dfrac1{n^4}\bkt{\dsum_{j=1}^n \bkt{nx_j-\bkt{x_1+x_2+\cdots+x_n}}^2},\\
	& = \dfrac{\dsum_{j=1}^n\bkt{\bkt{nx_j}^2 + \bkt{x_1+x_2+\cdots+x_n}^2-2nx_j\bkt{x_1+x_2+\cdots+x_n}}}{n^4},\\
	& = \dfrac{n^2 \dsum_{j=1}^n x_j^2 - n\bkt{x_1+x_2+\cdots+x_n}^2}{n^4},\\
	& = \dfrac{\magn{x}_2^2}{n^2} - \dfrac{\bkt{x_1+x_2+\cdots+x_n}^2}{n^3}.
\end{align}
Hence, we have
$$\dfrac{\magn{S_n^{-1}T_nx}_2^2}{\magn{x}_2^2} = \dfrac1{n^2} - \dfrac{\bkt{\mathbb{I}^Tx}^2}{n^3\magn{x}_2^2}.$$
Hence, we obtain that $\magn{S_n^{-1}T_n}_2 = \dfrac1{n}$.
\end{proof}
\begin{tcolorbox}
Hence, from Theorem~\ref{Bauer-Fike}, Lemma~\ref{lemma1} and Lemma~\ref{lemma2}, we have that if $\mu$ is an non-trivial eigenvalue of $M_n$, then there exists
\begin{align}
	\lambda \in \left\{\pm\dfrac{n^2}{2\sin\bkt{\pi/n}},\pm\dfrac{n^2}{2\sin\bkt{2\pi/n}},\ldots,\pm\dfrac{n^2}{2\sin\bkt{m\pi/n}}\right\}
\end{align}
such that
\begin{align}
	\dfrac{\abs{\mu-\lambda}}{\abs{\lambda}} \leq \dfrac1n.
\end{align}
\end{tcolorbox}
This proves Theorem~\ref{main_odd_theorem}.
\end{proof}
\noindent \begin{remark} \label{remark1} We, in fact, numerically observe (see Tables~\ref{magic_3},~\ref{magic_5},~\ref{magic_7},~\ref{magic_9} and~\ref{magic_11}) a stronger version of the above result (Theorem~\ref{main_odd_theorem}), namely, for each eigenvalue of $J_nS_n$, say $\lambda$, there is an eigenvalue of $M_n$, say $\mu$, such that
	\begin{align}
		\dfrac{\abs{\mu-\lambda}}{\abs{\lambda}} \leq \dfrac1n,
	\end{align}
	or equivalently the set of non-trivial eigenvalues of an odd magic square ($M_n$ with $n=2m+1$) generated using the \textbf{Siamese method} is $\frac1n$-well-approximated by the set
	\begin{align}
		\left\{\pm\dfrac{n^2}{2\sin\bkt{\pi/n}},\pm\dfrac{n^2}{2\sin\bkt{2\pi/n}},\ldots,\pm\dfrac{n^2}{2\sin\bkt{m\pi/n}}\right\}.
	\end{align}
\end{remark}
\noindent \begin{remark} \label{remark2} A slightly stronger form of Theorem~\ref{main_odd_theorem} can be proven by diagonalizing the matrix $J_nS_n$ and making use of Gershgorin circle theorem. However, the result still doesn't prove the observation in Remark~\ref{remark1}.
\end{remark}
\begin{table}
	\caption{Eigenvalue of $3 \times 3$ magic square}
	\begin{center}
	\begin{tabular}{|c|c|c|}
		\hline
		Eigenvalues of $M_n$ ($\mu$) & Eigenvalues of $J_nS_n$ ($\lambda$) & Relative error $\bkt{\dfrac{\abs{\mu-\lambda}}{\abs{\lambda}} \leq \dfrac13}$\\
		\hline
        -4.89897949 &      -5.196152422707 & 5.719096e-02\\ 
 \hline 
          4.89897949 &       5.196152422707 & 5.719096e-02\\ 
 \hline 
         15 &      15 & 0\\ 
 \hline 
	\end{tabular}
	\end{center}
	\label{magic_3}
\end{table}
\begin{table}
	\caption{Eigenvalue of $5 \times 5$ magic square}
	\begin{center}
	\begin{tabular}{|c|c|c|}
		\hline
		Eigenvalues of $M_n$ ($\mu$) & Eigenvalues of $J_nS_n$ ($\lambda$) & Relative error $\bkt{\dfrac{\abs{\mu-\lambda}}{\abs{\lambda}} \leq \dfrac15}$\\
		\hline
        -21.27676547 &     -21.266270208801 & 4.935168e-04\\ 
 \hline 
        -13.12628093 &     -13.143277802978 & 1.293199e-03\\ 
 \hline 
         13.12628093 &      13.143277802978 & 1.293199e-03\\ 
 \hline 
         21.27676547 &      21.266270208801 & 4.935168e-04\\ 
 \hline 
         65 &      65 & 0\\ 
 \hline 
	\end{tabular}
	\end{center}
	\label{magic_5}
\end{table}
\begin{table}
	\caption{Eigenvalue of $7 \times 7$ magic square}
	\begin{center}
	\begin{tabular}{|c|c|c|}
		\hline
		Eigenvalues of $M_n$ ($\mu$) & Eigenvalues of $J_nS_n$ ($\lambda$) & Relative error $\bkt{\dfrac{\abs{\mu-\lambda}}{\abs{\lambda}} \leq \dfrac17}$\\
		\hline
       -56.48482668 &     -56.466739338581 & 3.203186e-04\\ 
 \hline 
        -31.08815856 &     -31.336676188403 & 7.930568e-03\\ 
 \hline 
        -25.39666812 &     -25.130063150178 & 1.060901e-02\\ 
 \hline 
         25.39666812 &      25.130063150178 & 1.060901e-02\\ 
 \hline 
         31.08815856 &      31.336676188403 & 7.930568e-03\\ 
 \hline 
         56.48482668 &      56.466739338581 & 3.203186e-04\\ 
 \hline 
        175 &     175 & 0\\
 \hline 
	    \hline
	\end{tabular}
	\end{center}
	\label{magic_7}
\end{table}
\begin{table}
	\caption{Eigenvalue of $9 \times 9$ magic square}
	\begin{center}
	\begin{tabular}{|c|c|c|}
		\hline
		Eigenvalues of $M_n$ ($\mu$) & Eigenvalues of $J_nS_n$ ($\lambda$) & Relative error $\bkt{\dfrac{\abs{\mu-\lambda}}{\abs{\lambda}} \leq \dfrac19}$\\
		\hline
       -118.41407217 &    -118.414078206605 & 5.096328e-08\\ 
 \hline 
        -63.00687636 &     -63.006814987847 & 9.740991e-07\\ 
 \hline 
        -46.47580015 &     -46.765371804360 & 6.192010e-03\\ 
 \hline 
        -41.12470113 &     -41.124777781373 & 1.863973e-06\\ 
 \hline 
         41.12470113 &      41.124777781373 & 1.863973e-06\\ 
 \hline 
         46.47580015 &      46.765371804360 & 6.192010e-03\\ 
 \hline 
         63.00687636 &      63.006814987847 & 9.740991e-07\\ 
 \hline 
        118.41407217 &     118.414078206605 & 5.096329e-08\\ 
 \hline 
        369 &     369 & 0\\
 \hline 
	\end{tabular}
	\end{center}
	\label{magic_9}
\end{table}
\begin{table}
	\caption{Eigenvalue of $11 \times 11$ magic square}
	\small
	\begin{center}
	\begin{tabular}{|c|c|c|}
		\hline
		Eigenvalues of $M_n$ ($\mu$) & Eigenvalues of $J_nS_n$ ($\lambda$) & Relative error $\bkt{\dfrac{\abs{\mu-\lambda}}{\abs{\lambda}} \leq \dfrac1{11}}$\\
		\hline
       -214.74266474 &    -214.742664739496 & 1.005878e-14\\ 
 \hline 
       -111.90424039 &    -111.904240387785 & 6.442010e-12\\ 
 \hline 
        -80.05297263 &     -80.052972642020 & 1.428606e-10\\ 
 \hline 
        -66.51041338 &     -66.510413341849 & 5.782562e-10\\ 
 \hline 
        -61.12213518 &     -61.122135205551 & 4.611025e-10\\ 
 \hline 
         61.12213518 &      61.122135205551 & 4.611032e-10\\ 
 \hline 
         66.51041338 &      66.510413341849 & 5.782560e-10\\ 
 \hline 
         80.05297263 &      80.052972642020 & 1.428577e-10\\ 
 \hline 
        111.90424039 &     111.904240387786 & 6.441375e-12\\ 
 \hline 
        214.74266474 &     214.742664739495 & 9.529374e-15\\ 
 \hline 
        671 &     671 & 0\\ 
 \hline 
	\end{tabular}
	\end{center}
	\label{magic_11}
\end{table}
\begin{table}
	\caption{Eigenvalue of $13 \times 13$ magic square}
	\begin{center}
	\small
	\begin{tabular}{|c|c|c|}
		\hline
		Eigenvalues of $M_n$ ($\mu$) & Eigenvalues of $J_nS_n$ ($\lambda$) & Relative error $\bkt{\dfrac{\abs{\mu-\lambda}}{\abs{\lambda}} \leq \dfrac1{11}}$\\
		\hline
        -353.09013412 &    -353.090134118702 & 1.609884e-16\\ 
  \hline 
        -181.82867798 &    -181.828677981311 & 1.406793e-15\\ 
  \hline 
        -127.42740570 &    -127.427405699228 & 7.449615e-14\\ 
  \hline 
        -102.67515987 &    -102.675159868710 & 7.413024e-13\\ 
  \hline 
         -90.37276161 &     -90.372761608196 & 2.140605e-12\\ 
  \hline 
         -85.12062520 &     -85.120625195150 & 1.493364e-12\\ 
  \hline 
          85.12062520 &      85.120625195150 & 1.493531e-12\\ 
  \hline 
          90.37276161 &      90.372761608196 & 2.139190e-12\\ 
  \hline 
         102.67515987 &     102.675159868710 & 7.421328e-13\\ 
  \hline 
         127.42740570 &     127.427405699227 & 7.159660e-14\\ 
  \hline 
         181.82867798 &     181.828677981311 & 2.032035e-15\\ 
  \hline 
         353.09013412 &     353.090134118702 & 1.931861e-15\\ 
  \hline 
        1105 &    1105 & 0\\ 
  \hline
	\end{tabular}
	\end{center}
	\label{magic_13}
\end{table}
Let 
\begin{align}
	e_n & = \displaystyle\max_{i=1,2,\ldots,n-1} \displaystyle\min_{j=\pm1,\pm2,\ldots,\pm m} \dfrac{\abs{\mu_i-\lambda_j}}{\abs{\lambda_j}}.
\end{align}
We term $e_n$ as maximum relative error in eigenvalues between $M_n$ and $J_nS_n$. Figure~\ref{fig_rel_error} plots $e_n$ versus $n$ for odd values of $n$.
There is a lot to decipher from Figure~\ref{fig_rel_error}. Some observations are enumerated below.
\begin{enumerate}
	\item
	The bound of $1/n$ is very conservative.
	\item
	The maximum relative error for $n \equiv 3\pmod6$ ($n$ a multiple of 3) is much higher than the rest.
	\item
	The maximum relative error is close to machine precision for large prime numbers.
\end{enumerate}
Our current work is not mature enough to offer a deep analysis of the above observations.
\begin{figure}[!htbp]
	\caption{Maximum relative error in eigenvalues between $M_n$ and $J_nS_n$ for odd values of $n$}
	\includegraphics[width=0.9\textwidth]{./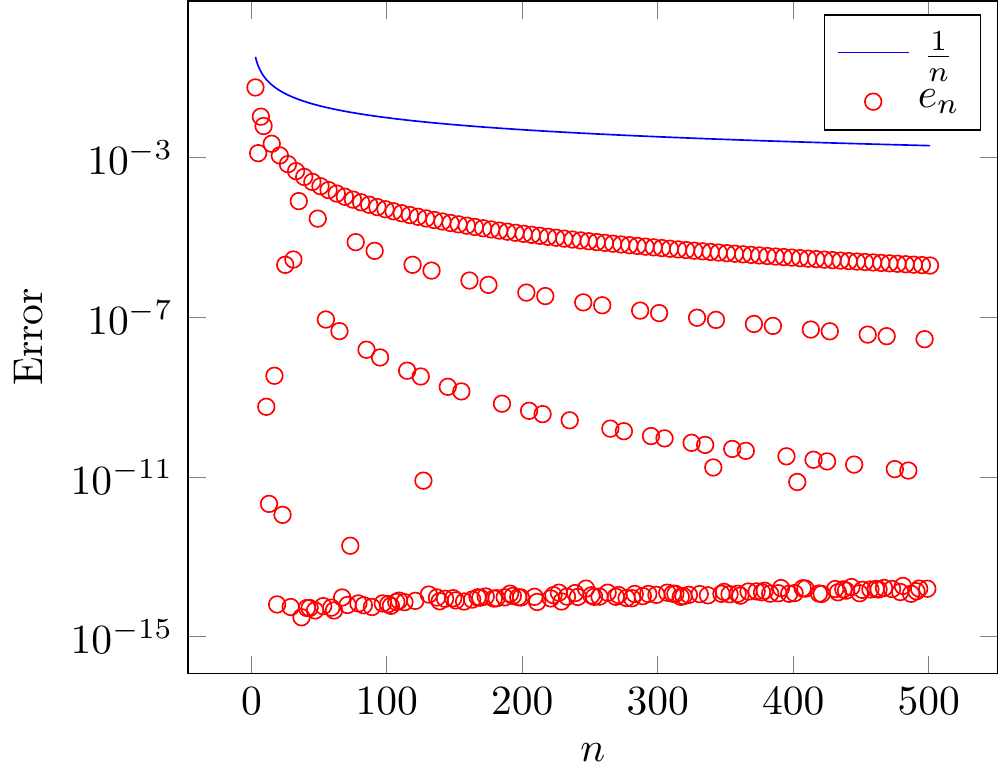}
	\label{fig_rel_error}
\end{figure}
\FloatBarrier
\section{Singly even magic squares}
\label{singly_even}
In this section, we look at the eigenvalues of singly even magic square. The singly even magic squares in MATLAB are constructed using the Strachey method~\cite{ball2016mathematical}. The MATLAB code for obtaining singly even magic square is presented here~\cite{clevemagic2}. We also present it below for the sake of clarity and continuity.
\begin{tcolorbox}
\begin{verbatim}
		function [A] = magic(n)
		% n is twice an odd number n = 2m and m = 2k+1.
		m = n/2; 
		% Smaller magic square
		A = magic (m);   
		% Almost Magic square
		A = [A, A+2*m*m; A+3*m*m, A+m*m];  
		k = (m-1)/2; 
		if (k > 1) 
			I = 1:m; 
			J = [2:k, n-k+2:n]; 
			% Rearranging the column entries
			A([I,I+m],J) = A([I+m,I],J);  
		end 
		I = [1:k, k+2:m];
		A([I,I+m],1) = A([I+m,I],1);
		I = k + 1;
		A([I,I+m],I) = A([I+m,I],I);
\end{verbatim}
\end{tcolorbox}

Let $M_n$ denote the singly even magic square of size $n \times n$, where $n=2m=4k+2$, where $n,m,k \in \Zb^+$. The first two singly even magic squares generated by MATLAB are in Equations~\eqref{M_6} and~\eqref{M_10}.

\begin{align}
	M_6 & =
	\begin{bmatrix}
	    35 &    1 &    6 &   26 &    19 &    24\\
	     3 &   32 &    7 &   21 &    23 &    25\\
	    31 &    9 &    2 &   22 &    27 &    20\\
	     8 &   28 &   33 &   17 &    10 &    15\\
	    30 &    5 &   34 &   12 &    14 &    16\\
	     4 &   36 &   29 &   13 &    18 &    11
	\end{bmatrix}
	\label{M_6}
\end{align}
\begin{align}
	M_{10} & = \begin{bmatrix}
	92 &    99 &     1 &     8   &  15  &   67  &   74  &   51  &   58  &   40 \\
    98 &    80  &    7  &   14  &   16  &   73  &   55  &   57  &   64   &  41 \\
     4 &    81  &   88   &  20  &   22  &   54  &   56  &   63  &   70  &   47 \\
    85 &    87  &   19  &   21  &    3  &   60  &   62  &   69  &   71  &   28 \\
    86 &    93  &   25 &     2  &    9  &   61  &   68  &   75  &   52  &   34 \\
    17 &    24  &   76  &   83  &   90  &   42  &   49  &   26  &   33  &   65 \\
    23 &     5  &  82  &   89  &   91   &  48  &   30  &   32  &   39  &   66 \\
    79 &     6  &  13  &   95  &   97   &  29  &   31  &   38  &   45  &   72 \\
    10 &    12  &  94  &   96   &  78   &  35   &  37   &  44   &  46   &  53 \\
    11 &    18  &  100  &   77  &   84  &   36  &   43  &   50  &   27  &   59 
\end{bmatrix}
\label{M_10}
\end{align}

The singly even magic square thus obtained can be written as follows.
\begin{align}
\begin{split}	
 M_n & = \begin{bmatrix}
	M_m & M_m+2m^2 \mathbb{I}_{m \times m}\\
	M_m+3m^2 \mathbb{I}_{m \times m} & M_m+m^2 \mathbb{I}_{m \times m}
	\end{bmatrix}\\
	& +
	\begin{bmatrix}
		\Ib_{m \times 1}\\
		-\Ib_{m \times 1}
	\end{bmatrix}
	\begin{bmatrix}
		3m^2 \Ib_{1 \times k} & \Ob_{1 \times \bkt{2k+3}} & -m^2 \Ib_{1 \times \bkt{k-1}}
	\end{bmatrix}
	-3m^2uv^T
 \end{split},
\end{align}
where $u,v \in \Rb^{n \times 1}$ with
\begin{align}
	u_j = \begin{cases}
1 & \text{ if } j=k+1\\
-1 & \text{ if } j=n-k\\
0 & \text{ otherwise}
\end{cases}, \hspace{0.5cm} \text{and} \hspace{0.5cm}
v_j = \begin{cases}
1 & \text{ if } j=1\\
-1 & \text{ if } j=k+1\\
0 & \text{ otherwise}
\end{cases}.
\end{align}

\begin{align}
\begin{split}
	M_n & = \begin{bmatrix}M_m+\dfrac{3m^2}2\Ib_{m \times m} & M_m+\dfrac{3m^2}2\Ib_{m \times m}\\M_m+\dfrac{3m^2}2\Ib_{m \times m} & M_m+\dfrac{3m^2}2\Ib_{m \times m}\end{bmatrix} + \begin{bmatrix}-\dfrac{3m^2}2\Ib_{m \times m} & \dfrac{m^2}2\Ib_{m \times m}\\\dfrac{3m^2}2\Ib_{m \times m} & -\dfrac{m^2}2\Ib_{m \times m}\end{bmatrix}\\
	& + \begin{bmatrix}
		\Ib_{m \times 1}\\
		-\Ib_{m \times 1}
	\end{bmatrix}
	\begin{bmatrix}
		3m^2 \Ib_{1 \times k} & \Ob_{1 \times \bkt{2k+3}} & -m^2 \Ib_{1 \times \bkt{k-1}}
	\end{bmatrix}
	- 3m^2uv^T
 \end{split}\\
\begin{split}
& = \begin{bmatrix}
	I_m\\
	I_m
	\end{bmatrix}
	\bkt{M_m+\dfrac{3m^2}2\Ib_{m \times m}}\begin{bmatrix}
	I_m&
	I_m
	\end{bmatrix}
	+\dfrac{m^2}2\begin{bmatrix}
		\Ib_{m \times 1}\\
		-\Ib_{m \times 1}
	\end{bmatrix}
	\begin{bmatrix}
		-3\Ib_{1 \times m} & \Ib_{1 \times m}
	\end{bmatrix}\\
	& + \dfrac{m^2}2\begin{bmatrix}
		\Ib_{m \times 1}\\
		-\Ib_{m \times 1}
	\end{bmatrix}
	\begin{bmatrix}
		6 \Ib_{1 \times k} & \Ob_{1 \times \bkt{2k+3}} & -2 \Ib_{1 \times \bkt{k-1}}
	\end{bmatrix} - 3m^2uv^T.
 \end{split}
\end{align}

\begin{align}
\begin{split}
	M_n & = \begin{bmatrix}
	I_m\\
	I_m
	\end{bmatrix}
	\bkt{M_m+\dfrac{3m^2}2\Ib_{m \times m}}\begin{bmatrix}
	I_m&
	I_m
	\end{bmatrix} - 3m^2uv^T\\
	& +\dfrac{m^2}2\begin{bmatrix}
		\Ib_{m \times 1}\\
		-\Ib_{m \times 1}
	\end{bmatrix}
	\begin{bmatrix}
		3 \Ib_{1 \times k} & -3 \Ib_{1 \times \bkt{k+1}} & \Ib_{1 \times \bkt{k+2}} & - \Ib_{1 \times \bkt{k-1}}
   \end{bmatrix}
\end{split}.
\end{align}
Hence, we see that rank of $M_n$ is at most $m+2$ or equivalently at least $m-2$ of the eigenvalues are zeros. We will now prove the following result on the eigenvalues of $M_n$.

\begin{tcolorbox}[title=Non-trivial eigenvalues of singly even magic squares]
\begin{theorem}
	\label{main_singly_even_theorem}
	Let $M_n$ be a singly even magic square, where $n=2m=4k+2$, generated using the \textbf{Strachey method} and let $$\Lambda = \left\{\lambda_j = \dfrac{n^2}{4\sin\bkt{j\pi/m}}: j \in\{\pm1,\pm2,\ldots,\pm k\} \right\}$$
	\begin{enumerate}
		\item
		Two of the non-trivial eigenpairs of $M_n$ are $\bkt{-3m^2,\begin{bmatrix}\Ib_m\\-\Ib_m\end{bmatrix}}$ and $\bkt{3m^2,\begin{bmatrix}\Ib_m - 3e_{k+1}\\-\bkt{\Ib_m - 3e_{k+1}}\end{bmatrix}}$.
		\item
		$2k-1$ of the non-trivial eigenvalues are zero.
		\item
		The set of remaining $2k$ eigenvalues are $\frac2n$-approximated by $\Lambda$
	\end{enumerate}
\end{theorem}
\end{tcolorbox}


\begin{proof}
Consider the orthogonal matrix $Q_n = \dfrac1{\sqrt{2}}\begin{bmatrix}I_m & -I_m\\I_m & I_m\end{bmatrix}$. The eigenvalues of $M_n$ and the similarity transform $Q_n M_nQ_n^T$ are the same. Let us now evaluate $Q_nM_nQ_n^T$. Note that
\begin{align}
	Q_n\begin{bmatrix}I_m\\I_m\end{bmatrix} = \dfrac1{\sqrt2}\begin{bmatrix}\Ob_{m \times m}\\ 2I_m \end{bmatrix},
\end{align}
\begin{align}
	Q_n\begin{bmatrix}\Ib_{m\times 1}\\-\Ib_{m\times 1}\end{bmatrix} = \dfrac1{\sqrt2}\begin{bmatrix}2\Ib_{m \times 1} \\ \Ob_{m \times 1}\end{bmatrix}.
\end{align}
\begin{align}
	Q_nu = \sqrt2 e_{k+1}.
\end{align}
\begin{align}
	Q_nv = \dfrac1{\sqrt2} \tilde{v},
\end{align}
where $\tilde{v} \in \Rb^{n \times 1}$ with $\tilde{v}_j = \begin{cases}
1 &  \text{ if } j=1,m+1\\
-1 & \text{ if } j=k+1,m+k+1\\
0 & \text{ otherwise}.
\end{cases}$
Thus
\begin{align}
	Q_n\begin{bmatrix}
		\Ib_{m \times 1}\\
		-\Ib_{m \times 1}
	\end{bmatrix} & = \sqrt2 \begin{bmatrix}
	\Ib_{m\times 1}\\
	\Ob_{m\times 1}
	\end{bmatrix},
\end{align}
\begin{align}
	Q_n\begin{bmatrix}
		3 \Ib_{k \times 1} \\ -3 \Ib_{\bkt{k+1} \times 1} \\ \Ib_{\bkt{k+2} \times 1} \\ - \Ib_{\bkt{k-1} \times 1}
	\end{bmatrix} & =\sqrt2
	\begin{bmatrix}
		\Ib_k\\
		-2\\
		-2\\
		-\Ib_{k-1}\\
		2\Ib_k\\
		-1\\
		-1\\
		-2\Ib_{k-1}
	\end{bmatrix}.
\end{align}

Hence,

	\begin{align}
		\boxed{Q_n M_n Q_n^T = \begin{bmatrix}
	m^2A_{m} & m^2B_{m}\\
	\Ob_{m \times m} & 2M_m + 3m^2\Ib_{m \times m}
	\end{bmatrix}
	}.
	\end{align}
where 
\begin{align}
	A_m = \begin{bmatrix}\Ib_{m \times k} & -2\Ib_{m \times 1} & -2\Ib_{m \times 1} & -\Ib_{m \times \bkt{k-1}}\end{bmatrix}-3e_{k+1}w^T,
\end{align}
and
\begin{align}
	B_m=\begin{bmatrix}2\Ib_{m \times k} & - \Ib_{m \times 1} & - \Ib_{m \times 1} & -2\Ib_{m \times \bkt{k-1}}\end{bmatrix}-3e_{k+1}w^T.
\end{align}
where $w,e_{k+1} \in \Rb^{m \times 1}$ with
\begin{align}
	w_j = \begin{cases}
1 & \text{ if } j=1\\
-1 & \text{ if } j=k+1\\
0 & \text{ otherwise}.
\end{cases}
\end{align} and $e_{k+1}$ is a unit vector whose $\bkt{k+1}^{th}$ entry is $1$. Hence, the eigenvalues of $M_n$ are the eigenvalues of $Q_nM_nQ_n^T$, which inturn are the eigenvalues of $m^2A_m$ and $2M_m + 3m^2\Ib_{m \times m}$.

Note that $A_m$ is a rank $2$ matrix, since
\begin{align}
	A_m = \Ib_{m \times 1}\begin{bmatrix}\Ib_{1 \times k} & -2 & -2 & -\Ib_{1 \times \bkt{k-1}}\end{bmatrix} -3e_{k+1}w^T.
\end{align}

Further, note that
\begin{align}
	A_m \Ib_{m \times 1} & = \Ib_{m \times 1}\begin{bmatrix}\Ib_{1 \times k} & -2 & -2 & -\Ib_{1 \times \bkt{k-1}}\end{bmatrix} \Ib_{m \times 1} -3e_{k+1}w^T \Ib_{m \times 1},\\
	& = -3\Ib_{m \times 1}.
\end{align}
since $w^T \Ib = 0$. Also, note that the trace of $A_m$ is zero. Hence, the non-zero eigenvalues of $A_m$ are $\pm3$. Hence, two of the non-zero eigenvalues of $M_n$ are $\pm3m^2$, i.e., $\pm \dfrac{3n^2}4$. The remaining non-zero eigenvalues of $M_n$ are the eigenvalues of $2M_m+3m^2\Ib_{m \times m}$. From the previous section, we have that the eigenvalues of $M_m$ to be its magic sum, $\dfrac{m\bkt{m^2+1}}2$ and the remaining are $\frac1m$-approximated by
\begin{align}
\left\{\pm\dfrac{m^2}{\sin\bkt{j\pi/m}} \text{ where }j\in\{1,2,\ldots,k\} \right\}.
\end{align}
Of course the magic sum of $M_n = \dfrac{n\bkt{n^2+1}}2$ is nothing but $2\bkt{\dfrac{m\bkt{m^2+1}}2} + 3m^3$. The remaining non-trivial non-zero eigenvalues of $M_n$ are $\frac2n$-approximated by
\begin{align}
	\left\{\pm \dfrac{n^2}{4\sin\bkt{\pi/m}},\pm \dfrac{n^2}{4\sin\bkt{2\pi/m}},\ldots,\pm \dfrac{n^2}{4\sin\bkt{k\pi/m}} \right\}.
\end{align}
\end{proof}

\noindent \begin{remark} \label{remark3} If we could prove the stronger $\frac1n$-well-approximated remark as in Remark~\ref{remark1}, then Theorem~\ref{main_singly_even_theorem} could also be further strengthened appropriately.
\end{remark}
\section{Doubly even magic squares}
\label{doubly_even}
The doubly even magic squares in MATLAB are generated by a criss-cross method as presented here~\cite{clevemagic2}. We present the code below for the sake of clarity and continuity.
\begin{tcolorbox}
\begin{verbatim}
	    M = reshape(1:n^2,n,n)';
	    [I,J] = ndgrid(1:n);
	    K = fix(mod(I,4)/2) == fix(mod(J,4)/2);
	    M(K) = n^2+1 - M(K);
\end{verbatim}
\end{tcolorbox}
The first two doubly even magic squares generated by MATLAB are in Equations~\eqref{M_4} and~\eqref{M_8}.
\begin{align}
	\label{M_4}
	M_4 & = \begin{bmatrix}
	16 & 2 & 3 & 13\\
     5 & 11 & 10 & 8\\
     9 & 7 & 6 & 12\\
     4 & 14 & 15 & 1
	 \end{bmatrix}\\
	 	\label{M_8}
	 M_8 & =\begin{bmatrix}
     64  &   2   &   3  &    61 &    60 &    6  &    7  &    57\\
      9  &   55  &   54 &    12 &    13 &    51 &    50 &   16\\
     17  &   47  &   46 &    20 &    21 &    43 &    42 &    24\\
     40  &   26  &   27 &    37 &    36 &    30 &    31 &    33\\
     32  &   34  &   35 &    29 &    28 &    38 &    39 &    25\\
     41  &   23  &   22 &    44 &    45 &    19 &    18 &    48\\
     49  &   15  &   14 &    52 &    53 &    11 &    10 &    56\\
      8  &   58  &   59 &     5 &     4 &    62 &    63 &     1
	 \end{bmatrix}
\end{align}
 The main theorem proved in this section is below (Theorem~\ref{main_doubly_even_theorem}).
\begin{tcolorbox}[title=Non-trivial eigenvalues of doubly even magic squares]
\begin{theorem}
	\label{main_doubly_even_theorem}
	The rank of the doubly even magic square (where $n = 4m$) generated by MATLAB's \textbf{criss-cross} method is $3$ and the non-zero non-trivial eigenvalues are
	$$\pm \dfrac{n}2 \sqrt{\dfrac{n^3-n}3}$$
	The corresponding eigenvectors are $\sqrt{n^2-1}v \pm \sqrt{3n}u$, where
\setcounter{MaxMatrixCols}{12}
	$$u = \begin{bmatrix} w\\
	-\text{flip}\bkt{w} \end{bmatrix}_{4m \times 1}$$ and
	$$v^T = \begin{bmatrix} 1 & -1 & -1 & 1 & \cdots & 1 & -1 & -1 & 1 \end{bmatrix}_{1 \times 4m}$$
	with $w^T = \begin{bmatrix}(n-1) & -(n-3) & -(n-5) & (n-7) & \cdots & (-1)^m\end{bmatrix}$.
\end{theorem}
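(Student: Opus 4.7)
The plan is to exploit the structure of the MATLAB \textbf{criss-cross} construction to write $M_n$ as an explicit sum of outer products, observe that only three linearly independent direction vectors appear, and diagonalize the resulting restriction.

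First I translate the construction into an algebraic formula. Before the flipping step the matrix is $B_{ij} = (i-1)n + j$; the flip replaces $B_{ij}$ by $n^2+1-B_{ij}$ precisely when $\lfloor (i \bmod 4)/2 \rfloor = \lfloor (j \bmod 4)/2 \rfloor$. I introduce the sign vector $\epsilon \in \{-1,+1\}^n$ with $\epsilon_i = +1$ when $\lfloor (i \bmod 4)/2 \rfloor = 0$ and $\epsilon_i = -1$ otherwise, so $\epsilon$ has the period-four palindromic pattern $+,-,-,+,+,-,-,+,\ldots$ and the flip condition reads $\epsilon_i \epsilon_j = 1$. Expanding $(M_n)_{ij} = \frac{n^2+1}{2} + \frac{\epsilon_i\epsilon_j}{2}(n^2+1-2B_{ij})$ yields
\[
M_n = \dfrac{n^2+1}{2}\Ib\Ib^T + \dfrac{n^2+1}{2}\epsilon\epsilon^T - \alpha\epsilon^T - \epsilon\beta^T,
\]
where $\alpha_i = \epsilon_i(i-1)n$ and $\beta_j = \epsilon_j j$. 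The identity $\alpha = n(\beta - \epsilon)$ then eliminates $\alpha$, giving
\[
M_n = \dfrac{n^2+1}{2}\Ib\Ib^T + \dfrac{(n+1)^2}{2}\epsilon\epsilon^T - n\beta\epsilon^T - \epsilon\beta^T.
\]
Both the column and row spaces now lie in $\mathrm{span}\{\Ib,\epsilon,\beta\}$, so $\mathrm{rank}(M_n) \leq 3$.

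Next I pin down the eigenpairs. The palindromic period-four structure gives $\Ib^T\epsilon = 0$ and $\Ib^T\beta = 0$ (each block of four indices contributes zero), so $M_n \Ib = \mu_0 \Ib$. It therefore suffices to diagonalize $M_n$ on the $\Ib$-orthogonal subspace $\mathrm{span}\{\epsilon,\beta\}$. Guided by the claimed eigenvectors, I use the basis $v := \epsilon$, $u := (n+1)\epsilon - 2\beta$. Using $\epsilon^T\epsilon = n$, $\epsilon^T\beta = n(n+1)/2$, and $\beta^T\beta = n(n+1)(2n+1)/6$, a short calculation collapses the action of $M_n$ to the anti-diagonal form
\[
M_n v = \dfrac{n^2}{2}\,u, \qquad M_n u = \dfrac{n(n^2-1)}{6}\,v.
\]
The associated $2 \times 2$ block has trace zero and determinant $-n^3(n^2-1)/12$, so its eigenvalues are $\pm \dfrac{n}{2}\sqrt{(n^3-n)/3}$; the $v$-to-$u$ component ratio of each eigenvector is $\pm\sqrt{(n^2-1)/(3n)}$, which on clearing denominators gives the eigenvectors $\sqrt{n^2-1}\,v \pm \sqrt{3n}\,u$. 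Since both of these eigenvalues are nonzero, $\mathrm{rank}(M_n) = 3$ and the remaining $n-3$ eigenvalues vanish.

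Finally I verify that my $u$ matches the one in the statement. From $u_i = \epsilon_i(n+1-2i)$, the palindromic identity $\epsilon_{n-k+1} = \epsilon_k$ gives $u_{n-k+1} = -\epsilon_k(n+1-2k) = -u_k$, so the second half of $u$ is the negated reverse of the first half. The first $2m$ entries then read $n-1$, $-(n-3)$, $-(n-5)$, $(n-7)$, $\ldots$, terminating at magnitude one with sign $(-1)^m$, which is exactly the vector $w$ of the statement. The main obstacle is spotting the change of basis from $\{\epsilon,\beta\}$ to $\{v,u\}$ that produces the clean anti-diagonal block; once that substitution is identified, the remainder of the argument is routine bookkeeping.
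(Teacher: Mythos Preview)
Your proof is correct and follows essentially the same route as the paper: both arguments write $M_n$ as a rank-three sum of outer products in the directions $\Ib,u,v$ and then diagonalize the restriction of $M_n$ to that three-dimensional column space. The paper packages this as $M_n=USU^T$ with $U=[\Ib\ u\ v]$ and reduces to the $3\times 3$ eigenproblem $SDy=\lambda y$ (with $D=U^TU$), whereas you first peel off the $\Ib$-direction using orthogonality and then compute $M_nv$ and $M_nu$ directly to obtain the anti-diagonal $2\times 2$ block; your intermediate basis $\{\epsilon,\beta\}$ and the substitution $u=(n+1)\epsilon-2\beta$ in fact recover exactly the paper's identity $M_n=\tfrac{n^2+1}{2}\Ib\Ib^T+\tfrac{n}{2}uv^T+\tfrac12 vu^T$, which the paper merely asserts as ``fairly elementary algebra.''
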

\end{tcolorbox}
The proof of the above theorem (Theorem~\ref{main_doubly_even_theorem}) constitutes this section.

Let $P_4 = I_4+J_4$, where $I_4 \in \Rb^{4 \times 4}$ is the identity matrix and $J_4 \in \Rb^{4 \times 4}$ is the reverse identity matrix. If $S_n$ denotes the $n \times n$ matrix with numbers arranged row wise from $1$ to $n^2$, i.e., $S_n\bkt{i,j} = j+(i-1)n$, it is fairly easy to show that
\begin{align}
\begin{split}
	M_n\bkt{i,j} & = S_n\bkt{i,j} \bkt{1-P_4\bkt{i\bmod4,j\bmod4}}\\
	 & + S_n\bkt{n+1-i,n+1-j} P_4\bkt{i\bmod4,j\bmod4}.
\end{split}
\end{align}
It is a fairly elementary algebra to show that 
\begin{equation}
	M_n = \dfrac{n^2+1}2 \Ib \Ib^T + \dfrac{n}2 uv^T + \dfrac{vu^T}2,
\end{equation}
where
\begin{equation}u = \begin{bmatrix} w\\
	-\text{flip}\bkt{w} \end{bmatrix}_{4m \times 1},
\end{equation}
where $w^T = \begin{bmatrix}(n-1) & -(n-3) & -(n-5) & (n-7) & \cdots & (-1)^m\end{bmatrix}$ and
\begin{equation}
	v^T = \begin{bmatrix} 1 & -1 & -1 & 1 & \cdots & 1 & -1 & -1 & 1 \end{bmatrix}_{1 \times 4m}.
\end{equation}
This gives us
\begin{align}
\begin{split}
	M_n & = \begin{bmatrix}
		\Ib & u & v
	\end{bmatrix}
	\begin{bmatrix}
		\dfrac{n^2+1}2 & 0 & 0\\
		0 & 0 & \dfrac{n}2\\
		0 & \dfrac12 & 0
	\end{bmatrix}
	\begin{bmatrix}
		\Ib^T \\ u^T \\ v^T
	\end{bmatrix},\\
	& = USU^T. \label{usut}
\end{split}
\end{align}
Here, $U =  \begin{bmatrix} \Ib & u & v \end{bmatrix}$ and $S = \dfrac12 \begin{bmatrix}n^2+1 & 0 & 0\\ 0 & 0 & n\\ 0 & 1 & 0\end{bmatrix}$.

We have
\begin{align}
	D & = U^TU = \dfrac{n}3 \begin{bmatrix}
3 & 0 & 0\\
0 & n^2-1 & 0\\
0 & 0 & 3
\end{bmatrix}.
\end{align}

Since $M_n$ is rank $3$, we immediately obtain that $n-3$ eigenvalues are zeros and the corresponding eigenvectors are the vectors orthogonal to $\Ib$, $u$ and $v$. Now let's obtain the non-zero eigenvalues of $M_n$. Let $\bkt{\lambda,x}$ be a non-zero non-trivial eigenpair of $M_n$. This implies $M_nx = \lambda x$. Since eigenvector is in the range of $M_n$, and range of $M_n$ is same as range of $U$ (from Equation~\eqref{usut}), we have $x=Uy$, where $y \in \Rb^{3 \times 1}$. Hence, we have
\begin{align}
	USU^TUy & = \lambda U y, \label{ulambda}
 \end{align}
 Multiplying by the left inverse $(U^TU)^{-1}U^T$ on both sides of \eqref{ulambda}, we get
 \begin{align}
	SDy = \lambda y.
\end{align}
Hence, $\bkt{\lambda,y}$ is an eigenpair of $SD \in \Rb^{3 \times 3}$. We have
\begin{align}
	SD & = \dfrac{n}{6}\begin{bmatrix}
	3\bkt{n^2+1} & 0 & 0\\
	0 & 0 & 3n\\
	0 & n^2-1 & 0
	\end{bmatrix}.
\end{align}
The eigenpairs corresponding to non-zero eigenvalues are
\begin{enumerate}
	\item
	$\bkt{\dfrac{n\bkt{n^2+1}}2,\Ib_n}$
	\item
	$\bkt{\dfrac{n}2\sqrt{\dfrac{n^3-n}3},\sqrt{3n}u+\sqrt{n^2-1}v}$
	\item
	$-\bkt{\dfrac{n}2\sqrt{\dfrac{n^3-n}3},-\sqrt{3n}u+\sqrt{n^2-1}v}$.
\end{enumerate}
This gives us the non-zero eigenvalues to be $\dfrac{n\bkt{n^2+1}}2, \pm \dfrac{n}2\sqrt{\dfrac{n^3-n}3}$ and thereby proving Theorem~\ref{main_doubly_even_theorem}.
\section{Conclusion}
\label{conclusion}
Eigenvalues of the magic squares obtained using the magic() function in MATLAB are investigated.  We obtain approximations of eigenvalues of odd and singly even magic squares with error bounds. Further, the eigenvalues of doubly even magic squares are exactly obtained. The approximation of the spectra involves some interesting connections with the spectrum of g-circulant matrices and the use of the Bauer-Fike theorem. We also highlight some observations which we have not been able to prove and hope to address at least some of them in the future.
\section{Appendix}
\label{appendix}
Let $n$ be an odd number of the form $2m+1$, where $m \in \Zb$.
Then we have the following lemma.
\begin{lemma}
\label{lem1}
Let $X_n \in \Rb^{n \times n}$ be a circulant matrix whose first row is
\begin{equation}
    \bf{a} =\begin{bmatrix}
m & m+1 & m+2 & \cdots & 2m & 0 & 1 & \cdots & m-1
    \end{bmatrix}.
\end{equation}
We then have the eigenvalues of $X_n$ to be given by
\begin{equation}
    \lambda_j = \begin{cases}
    \dfrac{n\bkt{n-1}}2 & \text{ if }j=0\\
    \dfrac{\bkt{-1}^{j} in}2\csc\bkt{\dfrac{\pi j}n} & \text{ if }j \in \{1,2,\ldots,n-1\}.
    \end{cases}
\end{equation}
\end{lemma}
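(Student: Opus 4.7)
The plan is to use the standard formula for eigenvalues of a circulant matrix given in Note~\ref{note5}: if $\mathbf{a} = [a_1,\ldots,a_n]$ is the first row of $X_n$, then the eigenvalue associated to the Fourier mode $v_j$ is $\lambda_j = \mathbf{a}\,v_j = \sum_{k=0}^{n-1} a_{k+1}\,\omega^{kj}$, where $\omega = e^{-2\pi i/n}$. The case $j=0$ is immediate: $\lambda_0 = \sum_{k=0}^{n-1} a_{k+1}$ is just $0+1+\cdots+(n-1)$ rearranged, which equals $n(n-1)/2$.

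The main work is the case $j\in\{1,\ldots,n-1\}$. The key observation is that the entries of $\mathbf{a}$ can be written uniformly as $a_{k+1}=(k+m)\bmod n$ for $k=0,1,\ldots,n-1$. Using this, I would rewrite
\begin{align*}
\lambda_j = \sum_{k=0}^{n-1}(k+m)\,\omega^{kj} \;-\; n\sum_{k=m+1}^{n-1}\omega^{kj},
\end{align*}
since subtracting $n$ from the last $m$ entries exactly converts the linear sequence $k+m$ (for $k=0,\ldots,n-1$) into the wrap-around sequence $\mathbf{a}$. For $j\neq 0$, the constant term $m\sum_k \omega^{kj}$ vanishes, and we are left with two standard closed forms: $\sum_{k=0}^{n-1} k\,\omega^{kj} = n/(\omega^j-1)$ (from differentiating the geometric series, using $\omega^{nj}=1$) and $\sum_{k=m+1}^{n-1}\omega^{kj} = -(\omega^{(m+1)j}-1)/(\omega^j-1)$.

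Adding these two contributions yields the surprisingly clean expression
\begin{align*}
\lambda_j = \frac{n\,\omega^{(m+1)j}}{\omega^j - 1}.
\end{align*}
From here the result follows from a routine half-angle manipulation: factor $\omega^j - 1 = e^{-\pi i j/n}\bigl(-2i\sin(\pi j/n)\bigr)$, and note that the remaining phase in the numerator becomes $e^{-\pi i j(2m+1)/n} = e^{-\pi i j} = (-1)^j$, giving $\lambda_j = (-1)^j\,i n\,\csc(\pi j/n)/2$.

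The only mildly delicate step is getting the wrap-around bookkeeping right when converting $\mathbf{a}$ into the affine sequence $k+m$ minus a correction; everything after that is a mechanical simplification of geometric sums and complex exponentials. No heavier machinery is needed.
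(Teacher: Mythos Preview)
Your proof is correct and follows essentially the same route as the paper: both invoke Note~\ref{note5}, reduce to the sum $\sum_k k\,\omega^{kj} = n/(\omega^j-1)$, and finish with the same half-angle simplification. The only cosmetic difference is that the paper handles the wrap-around by the reindexing $l = (m+k)\bmod n$ (giving $\lambda_j = \omega^{-mj}\sum_l l\,\omega^{lj}$ directly) rather than your explicit correction term, but since $\omega^{(m+1)j} = \omega^{-mj}$ the two arrive at the identical closed form.
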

\begin{proof}
The $j^{th}$ eigenvalues is given by
\begin{equation}
    \lambda_j  = {\bf{a}} v_j.
\end{equation}
Hence, we have
\begin{align}
\begin{split}
    \lambda_j & = \dsum_{k=0}^{2m} \bkt{\bkt{m+k}\mod n}\omega^{kj} = \omega^{-mj}\bkt{\dsum_{k=0}^{2m} \bkt{\bkt{m+k}\mod n}\omega^{(m+k)j}}\\
    & = \omega^{-mj}\bkt{\dsum_{k=0}^{2m} k\omega^{kj}}.
\end{split}
\end{align}
For $j=0$, we have $\lambda_n = 0+1+\cdots+2m = \dfrac{n\bkt{n-1}}2$. Let $j \in \{1,2,\ldots,n-1\}$. We then have
\begin{align}
\begin{split}
    \lambda_j & =\omega^{-mj}\bkt{\dsum_{k=0}^{2m} k\omega^{kj}} = \dfrac{n\omega^{-mj}}{\omega^{j}-1} = \dfrac{n\exp\bkt{\dfrac{2\pi imj}n}}{\exp\bkt{-\dfrac{2\pi i j}n}-1} = \dfrac{n\exp\bkt{\dfrac{2\pi ij(n-1)}{2n}}}{\exp\bkt{-\dfrac{2\pi i j}n}-1},\\
    &= \dfrac{ \exp \bkt{\dfrac{\pi i j}{n}} n\exp\bkt{\dfrac{2\pi ij(n-1)}{2n}}}{ \exp\bkt{\dfrac{\pi i j}{n}} \bkt{\exp\bkt{-\dfrac{2\pi i j}n}-1}},  \\
   &=\dfrac{ n\exp\bkt{\dfrac{2\pi ij(n)}{2n}}}{\bkt{\exp\bkt{-\dfrac{\pi i j}n}-\exp\bkt{-\dfrac{\pi i j}n}}} = \dfrac{ n\exp\bkt{\pi i j}}{-2i \frac{\exp\bkt{\dfrac{\pi i j}{n}}-\exp\bkt{-\dfrac{\pi i j}{n}}}{2i}}, \\
    & = \dfrac{(-1)^{j+1} n}{2i\sin\bkt{\dfrac{\pi j}n}} = \bkt{-1}^{j} \dfrac{i n}{2\sin\bkt{\dfrac{\pi j}n}} = \boxed{\dfrac{(-1)^j i n}2 \csc\bkt{\dfrac{\pi j}n}}.
\end{split}
\end{align}
Hence,
\begin{equation}
	X_n = \dfrac{F_n \Lambda_n F_n^*}n,
\end{equation}
where $\Lambda_n(j,j) = \lambda_j  = \boxed{\dfrac{(-1)^j i n}2 \csc\bkt{\dfrac{\pi j}n}}$.
\end{proof}

\bibliographystyle{siamplain}
\bibliography{references}

\begin{thebibliography}{10}

\bibitem{adetokunbo20163d}
{\sc P.~Adetokunbo, A.~A. Al-Shuhail, and S.~Al-Dossary}, {\em 3d seismic edge
  detection using magic squares and cubes}, Interpretation, 4 (2016),
  pp.~T271--T280.

\bibitem{anderson2001mathematical}
{\sc D.~Anderson}, {\em Mathematical rootsl: Magic squares: Discovering their
  history and their magic}, Mathematics Teaching in the Middle School, 6
  (2001), pp.~466--470.

\bibitem{andrade2020spectra}
{\sc E.~Andrade, L.~Arrieta, C.~Manzaneda, and M.~Robbiano}, {\em On the
  spectra of some g-circulant matrices and applications to nonnegative inverse
  eigenvalue problem}, Linear Algebra and its Applications, 590 (2020),
  pp.~1--21.

\bibitem{ball2016mathematical}
{\sc W.~R. Ball and H.~S.~M. Coxeter}, {\em Mathematical recreations \&
  essays}, in Mathematical Recreations \& Essays, University of Toronto Press,
  2016.

\bibitem{benefiel2012magic}
{\sc R.~R. Benefiel}, {\em Magic squares, alphabet jumbles, riddles and more:
  The culture of word-games among the graffiti of pompeii}, in The muse at
  play, De Gruyter, 2012, pp.~65--80.

\bibitem{cammann1969islamic}
{\sc S.~Cammann}, {\em Islamic and indian magic squares. part i}, History of
  Religions, 8 (1969), pp.~181--209.

\bibitem{eisenstat1998three}
{\sc S.~C. Eisenstat and I.~C. Ipsen}, {\em Three absolute perturbation bounds
  for matrix eigenvalues imply relative bounds}, SIAM Journal on Matrix
  Analysis and Applications, 20 (1998), pp.~149--158.

\bibitem{hunter2010some}
{\sc J.~J. Hunter}, {\em Some stochastic properties of “semi-magic” and
  “magic” markov chains}, Linear algebra and its applications, 433 (2010),
  pp.~893--907.

\bibitem{lee2006linear}
{\sc M.~Lee, E.~Love, E.~Wascher, et~al.}, {\em Linear algebra of magic
  squares}, Undergraduate Research, Central Michigan University, Mount
  Pleasant, Mich, USA,  (2006).

\bibitem{liu2012study}
{\sc Y.~LIU and J.~bo~Liu}, {\em Study and application of magic square group in
  process of image scrambling}, Computer Technology and Development, 22 (2012),
  pp.~119--122.

\bibitem{loly2009magic}
{\sc P.~Loly, I.~Cameron, W.~Trump, and D.~Schindel}, {\em Magic square
  spectra}, Linear algebra and its applications, 430 (2009), pp.~2659--2680.

\bibitem{mattingly2000even}
{\sc R.~B. Mattingly}, {\em Even order regular magic squares are singular}, The
  American Mathematical Monthly, 107 (2000), pp.~777--782.

\bibitem{clevemagic2}
{\sc C.~Moler}, {\em Magic squares, part 2, algorithms}, 2012,
  \url{https://blogs.mathworks.com/cleve/2012/11/05/magic-squares-part-2-algorithms/}.

\bibitem{moler2004numerical}
{\sc C.~B. Moler}, {\em Numerical computing with MATLAB}, SIAM, 2004.

\bibitem{bnmn}
{\sc B.~Newman}, {\em Matlab magic}, contributed presentation, 1997 MATLAB
  Conference, Sydney, Australia,  (October 23-24, 1997).

\bibitem{ranjani2017data}
{\sc J.~J. Ranjani}, {\em Data hiding using pseudo magic squares for embedding
  high payload in digital images}, Multimedia Tools and Applications, 76
  (2017), pp.~3715--3729.

\bibitem{van1990magic}
{\sc A.~Van Den~Essen}, {\em Magic squares and linear algebra}, The American
  Mathematical Monthly, 97 (1990), pp.~60--62.

\bibitem{weisstein2002magic}
{\sc E.~W. Weisstein}, {\em Magic square}, https://mathworld.wolfram.com/,
  (2002).

\end{thebibliography}
\end{document}